\documentclass[letterpaper, 10 pt, conference]{ieeeconf}  

\IEEEoverridecommandlockouts                              

\newtheorem{theorem}{Theorem}

\newtheorem{define}{Definition}

\newtheorem{remark}{Remark}

\newtheorem{assumption}{Assumption}

\usepackage{graphics} 
\usepackage{epsfig} 
\usepackage{xcolor}
\usepackage{amsmath} 
\usepackage{amssymb}  
\usepackage[dvipsnames]{xcolor}

\usepackage{stmaryrd} 
\usepackage{bbm} 
\usepackage{algorithmic}
\usepackage[linesnumbered,ruled,vlined]{algorithm2e}
\usepackage{url}
\usepackage{dsfont}
\usepackage{caption}
\usepackage{subcaption}
\usepackage{booktabs}
\usepackage{dsfont}
\usepackage{mathtools}
\usepackage{blkarray}
\usepackage[normalem]{ulem}

\usepackage{tikz}
\usetikzlibrary{arrows,positioning}

\usepackage{dsfont}

\usepackage{tikz}
\usetikzlibrary{shapes,arrows,positioning}
\usetikzlibrary{automata,arrows,positioning,calc}
\usetikzlibrary{arrows.meta,positioning}
\usetikzlibrary{
  arrows.meta,
  positioning,
  decorations.pathreplacing,
  calc
}

\newcommand{\bsalpha}{\boldsymbol{\alpha}}

\newcommand{\RR}{\mathbb{R}}

\newcommand{\bLambda}{\mathbf{\Lambda}}

\newcommand\cI{\mathcal I}
\newcommand\cK{\mathcal K}

\newcommand\cM{\mathcal M}

\newcommand\bZ{\boldsymbol{Z}}

\DeclareMathOperator*{\argmin}{arg\,min}

\newcommand\rmS{\mathrm{S}}
\newcommand\rmK{\mathrm{K}}
\newcommand\rmI{\mathrm{I}}
\newcommand\rmR{\mathrm{R}}

\newcommand{\cJ}{\mathcal{J}}

\makeatletter
\let\NAT@parse\undefined
\makeatother
\usepackage[colorlinks]{hyperref}

\title{\LARGE \bf
Regulation of Rumor Propagation via \\(Multi-Leader) Stackelberg Graphon Games
}

\author{Huaning Liu and G\"ok{\c c}e Dayan{\i}kl{\i}
\thanks{Department of Statistics,
  University of Illinois at Urbana-Champaign, 
  Champaign, IL 61820, USA 
        {\tt\small huaning3@illinois.edu}}
\thanks{Department of Statistics,
  University of Illinois at Urbana-Champaign, 
  Champaign, IL 61820, USA 
        {\tt\small gokced@illinois.edu}}
}

\begin{document}

\maketitle
\thispagestyle{empty}
\pagestyle{empty}

\begin{abstract}

We study the control of rumor propagation in large networked populations by using Stackelberg graphon games. We first introduce a principal who wants to incentivize the spread of her preferred news and discourage the spread of non-preferred news. We define the Stackelberg graphon game equilibrium (SGGE), characterize the graphon game Nash equilibrium (GGNE) with a forward-backward differential equation system, and establish existence results. We further formulate a multi-leader model with two competing principals, each incentivizing her own preferred news. Finally, we propose a bi-level algorithm for computing (multi-leader) Stackelberg graphon game equilibria and conclude with numerical experiments where we show that existence of competing principals will result in strong opinion divisions in the population.

\end{abstract}

\section{Introduction}
Understanding and controlling the spread of rumors in large populations is a critical challenge with implications for public policy, media regulation, and social dynamics. Unlike epidemic spread models, rumor propagation involves strategic decision-making and belief reversals, often influenced by external actors such as governments or media platforms. These higher level actors which we refer to as principals can shape public opinion by incentivizing individuals to amplify preferred narratives or suppress undesirable ones through financial rewards or allocation of advertisement budgets.
 
Mean field games (MFGs) offer an approximation framework for large population non-cooperative games under the assumption of homogeneous agents who interact symmetrically~\cite{carmona_MFG_booki} \cite{lasry_lions_MFG}. It leverages the mean-field approximation by focusing on the interactions between the representative agent and the population distribution (of states and/or controls), effectively characterizing the equilibrium by a coupled forward backward differential equation system. In the literature, MFGs have been utilized to model many real life applications, including market competition via advertisement~\cite{duopoly_twoleader,SALHAB20221079,advertisement_carryover}, energy management~\cite{sircar_chan_energy,hubert_energy,alasseur2020extended,dayanikli_acc2024,carbon_StackelbergMFG,dayanikli_energy_acc2025}, systemic risk~\cite{carmona_sys_risk}, etc. On the other hand, \textit{graphon games} extend assumptions of MFG by introducing asymmetric interactions among heterogeneous agents via a graphon (limit of a dense graph which will be introduced in later sections) \cite{graphon_parise,caines_graphon,carmona2019stochasticgraphongamesi,aurell2022stochastic}. Therefore, they are particularly useful for modeling the games involving network interactions such as opinion dynamics~\cite{rumor_skir} and epidemic control~\cite{graphon_epidemics}.

Classical rumor models such as Daley-Kendall (DK)~\cite{daley-kendall} and Maki-Thompson (MT)~\cite{Maki1973MathematicalMA}, and their extensions to incorporate additional states~\cite{rumor_eg_1_yu,rumor_eg_2_xiao} that are supported by sociological justifications treat rumor spread analogously to epidemic spread, but they lack mechanisms for strategic individual level decision-making and external influence by the regulators. In contrast, our model focuses on incorporating both of these aspects.

In opinion dynamics applications in large populations, individuals often face a tradeoff between reputation and risk, transforming the public opinion into a strategic game~\cite{marie_game_opinion}. To analyze such problems of individual decision-making, some recent studies applied the concept of mean-field interactions (see e.g. continuous state analysis in~\cite{basar_rumor_MFG}, and finite state nonlinear case in~\cite{rumor_skir}). However, these models did not cover the \textit{optimal} policies for the regulators to pull the opinion in the population towards their preferred direction. In this way, one of the main goals of studying rumor propagation models is to provide guidance for government or leaders' decision-making. Economics literature has studied principal-agent and multi-agent-principal problems extensively see e.g. \cite{principal-agent-1, principal-agent-2}. Building on the contract theory literature, recent works consider multi-agent-principal problem under large agent populations with mean-field approximations (also termed as \textit{Stackelberg mean field games}), and provide linear-quadratic (LQ) case analysis \cite{StackelbergMFG-Delayed,LQStackelbergMFG}, convergence analysis \cite{djete2023stackelbergmeanfieldgames},numerical methods \cite{ML-penalty-stackelbergMFG,stackelbergMFGEpidemics}, and multi-leader case under the static time setting~\cite{duopoly_twoleader}.

In this work, we consider non-cooperative individuals interacting on a \textit{network} and leaders incentivizing the individuals to pursue their own interests. We will discuss both the cases of single-leader and duo-leader, where in the latter case leaders are also competing. Our contributions are three fold. First, we introduce \textit{single-principal Stackelberg graphon games} and \textit{duo-principal Stackelberg graphon games} for rumor propagation and opinion dynamics control for large population of rational agents. To the best of our knowledge, this is the first work on Stackelberg graphon games on the finite state setting. Second, we provide forward-backward ordinary differential equation (FBODE) system that characterizes the graphon game equilibrium of the agents in the population given the regulations by principals and provide the existence results. Finally, we give numerical results for both single and multi-leader settings. We note that prior work examined a similar setting \cite{mfg_opinion_dy}, where users sharing common characteristics were grouped into multiple populations under mean-field interactions. Our work considers distinguishable individuals with finite states, and we remark that the example of a piecewise-constant graphon in Section~\ref{sec:numerics} reflects a similar idea of multi-population.

The paper is organized as follows. In Section~\ref{sec:model_single}, we state and justify the details of the single-leader Stackelberg graphon game model of rumor propagation control and define the related equilibria. In Section~\ref{sec:theory}, we give the main theoretical results which includes the characterization and the existence of the graphon game equilibrium. In Section~\ref{sec:model_multi}, we introduce the multi-leader Stackelberg graphon game model and the correponding equilibrium. In Section~\ref{sec:numerics}, we provide our numerical algorithm and provide numerical examples with power-law graphon and piece-wise constant graphon. We give the conclusion and future directions in~\ref{sec:conclusion-future}.

\section{Single-Principal Model}
\label{sec:model_single}
The authors’ earlier work~\cite{rumor_skir} analyzed minor-agent interactions under rumor model equipped with graphon without the presence of principals and their effects on the minor agents' models, which sets foundation for characterizing minor agents’ behavior. In this section, we extend the minor agent model to a setting to include principal(s) (i.e., leader(s)) in their model. Then, we formulate and explain the principal’s problem. The preliminary construction of general form of finite-player game is deferred to~\ref{sec:finite_player} for brevity.

\subsection{Minor Agents' Model}
We consider a continuum of heterogeneous (minor) agents interacting with a principal. Agents interact with each other on a network and each agent has negligible influence on the population due to the infinite population size. Agents are impacted by some incentives or regulations from the principal(s) i.e., the graphon game Nash equilibrium is partially determined by the decisions of principal.

\par{\textbf{States and transitions. }}
The minor agents transition among four states: \textit{Uninformed} (S), \textit{Known} (K), \textit{Infected} (I), and \textit{Uninterested} (R), whose transition flows are illustrated in the diagram~\ref{fig:transition}. It extends the classic finite-state rumor models (DK and MT) by including an anti-rumor state K, first proposed in~\cite{rumor_eg_2_xiao}. States K and I play opposite positions, corresponding to individuals who are actively spreading the principal’s preferred news and non-preferred news respectively. In particular, interpreting preferred news as truth and non-preferred news as fake news recovers the illustration given in earlier work~\cite{rumor_skir}.

Uninformed agents (S) may transition into believing the preferred news (K) or the non-preferred news (I), and individuals in these opposing states can switch opinions to the other. The state transition rates are jointly determined by the socialization levels of other agents, the principal’s incentives, underlying network structure, and the model parameters. On the other hand, agents in state K and I are getting uninterested (R) after an exponentially distributed time with rate $\mu_{\mathbf{K}}\ (\text{resp. } \mu_{\mathbf{I}}) \in(0, \bar{\mu}]$ for some $\bar{\mu}>0$. Individuals in state R revert to uninformed (S) with rate $\eta \in [0, \bar{\eta}]$, capturing a forgetting process. In addition, let $\beta: E \rightarrow [0, \bar{\beta}]$ be the \textit{base meeting intensity} of individuals, where $E := \{\rmS, \rmK, \rmI, \rmR\}$ is the finite state space and $\bar\beta>0$ is a constant uniform bound.
We remark that $\beta$ may also depend on time and on the agent; here for simplicity, we restrict it as state-dependent.

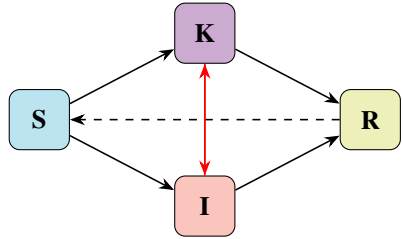
\begin{figure}[h]
    \centering
\begin{tikzpicture}[
  comp/.style = {
    draw,
    rounded corners,
    minimum width=0.8cm,
    minimum height=0.8cm,
    font=\bfseries,
    align=center
  },
  S/.style   = {comp, fill=SkyBlue!40},
  I/.style   = {comp, fill=Salmon!50},
  R/.style   = {comp, fill=GreenYellow!50},
  K/.style   = {comp, fill=Purple!40},
  arr/.style = {->, semithick, >=Stealth}
]

  \node[S] at (0,0)    (S) {S};
  \node[R] at (4.4,0)    (R) {R};
  \node[K] at (2.2,1.15)  (K) {K};
  \node[I] at (2.2,-1.15) (I) {I};

  \draw[arr] (S) -- (K);
  \draw[arr] (S) -- (I);
  \draw[arr] (K) -- (R);
  \draw[arr] (I) -- (R);
  \draw[arr,dashed] (R) -- (S);
  \draw[arr,draw=red] (K) -- (I);
  \draw[arr,draw=red] (I) -- (K);
\end{tikzpicture}
\caption{State Transition Flow for SKIR model}
\label{fig:transition}
\end{figure}

\par{\textbf{Game setup. }} Let $T > 0$ be a finite time horizon. We denote the set of $\RR \supset A$-valued admissible controls by $\mathbb{A}$.\footnote{We assume that admissible controls are closed-loop (i.e., function of agent’s own state), Markovian, and square-integrable.} In the limiting case where the number of agents goes to infinity, we focus on a continuum of agents that are indexed by $x\in I := [0,1]$. Consider agent $x \in I$, her control process is denoted by $\alpha^{x} := (\alpha_{t}^{x})_{t \in [0,T]}$ and represents her \textit{communication level} while her state process is denoted by $(X_{t}^{\boldsymbol{\alpha}, x})_{t \in [0,T]}$ and defined on $E$. Agent $x$'s state process is influenced by the control profile of every agent, $\boldsymbol{\alpha}:=(\alpha^{x})_{x\in I}$, which is added as a superscript to emphasize the influence, through agents' interactions on the undirected network. Under the limit case, such network is represented via graphon function $w$, formally defined as $w: I \times I \rightarrow [0,1]$ where $w(x,y)$ represents the social connection strength between agent $x$ and agent $y$. The aggregate of agent $x \in I$, namely the impact that $x$ receives from others towards her own cost and dynamics is given in the general form as
\small
\begin{equation}
    Z_t^{{\bsalpha}, x}=\int_I w(x, y) \mathbb{E}\left[K\left(\alpha_t^y, X_{t}^{{\bsalpha}, y}\right)\right] d y,
\end{equation}
\normalsize
where $K: A \times E \rightarrow \mathbb{R}$ is called \textit{impact function}. In this work, we will introduce two aggregate variables by making them state-specific:
\small
\begin{equation}
    \label{agg-def}
    \left\{\begin{array}{l}Z_{t, \mathrm{K}}^{\boldsymbol{\alpha}, x}=\int_I w(x, y) \int_A \alpha_t^y \rho_t^y(d a, \mathrm{K}) d y, \\[2mm] 
    Z_{t, \mathrm{I}}^{\boldsymbol{\alpha}, x}=\int_I w(x, y) \int_A \alpha_t^y \rho_t^y(d a, \mathrm{I}) d y.\end{array}\right.
\end{equation}
\normalsize
Intuitively, the aggregate pair of agent $x$ is the weighted communication level of her neighbors in the states $\mathrm{K}$ and $\mathrm{I}$.

Agents choose controls to minimize their individual cost functionals that depends on their own control, state, and the weighted interactions with the population via a graphon. The objective functional also depends on an admissible\footnote{Principal's control profile is admissible if it induces at least one graphon game Nash equilibrium (see below for GGNE definition). When multiple GGNEs exist, the principal selects an equilibrium profile at random.} policy $(\phi_t, \psi_t) =: \lambda_t \in \Lambda \subset \mathbb{R}^2$ of the principal, and the set of all principal's admissible controls is denoted $\mathbf{\Lambda}:=\Phi \times \Psi \subset C([0,T];\mathbb{R})^2$. In the rumor propagation example, $\phi_t$ is designed as the policy that affects minor agents' costs directly and $\psi_t$ is designed as the one that affects minor agents' dynamics. Therefore, when population strategy profile $\boldsymbol{\alpha}$ induces aggregates $\boldsymbol{Z}^{\boldsymbol{\alpha},x} = (Z_{t,j}^{\boldsymbol{\alpha}, x})_{j\in\{\mathrm{K}, \mathrm{I}\},t\in[0,T]}$, agent $x$'s total expected cost for choosing control $\sigma = (\sigma_t)_{t\in[0,T]}$ under policy $\lambda$ is written as $\cJ^{x}(\sigma; \boldsymbol{Z}^{\bsalpha,x},\lambda)$, where
\small
\begin{equation}
    \label{eq:total-expected-cost-formula}
    \begin{aligned}
        \cJ^{x}(\sigma; \boldsymbol{Z},\lambda):=\mathbb{E} &
        \left[\int_0^T f^x\left(t, X_t^{\boldsymbol{\alpha}, x}, \boldsymbol{Z}_t, \sigma_t, \lambda_t\right)dt\right.  \\ &\quad\quad\quad\quad\quad\left.+g^x\left(X_T^{\boldsymbol{\alpha}, x}, \boldsymbol{Z}_T, \lambda_T \right)\right].
    \end{aligned}
\end{equation}
\normalsize
The running and terminal costs of agent $x \in I$ are denoted by $f^{x}:[0, T] \times E \times \mathbb{R}^2 \times A \times \bLambda \rightarrow \mathbb{R}$ and $g^{x}:E \times \mathbb{R}^2 \times \bLambda \rightarrow \mathbb{R}$.In rumor propagation case, they are specified as 
\small
\begin{equation}
    \label{playercosts-1P}
    \begin{aligned}
        f\left(t, e, \boldsymbol{z}, \alpha, (\phi,\psi)\right)&=\frac{1}{2}(1-\alpha)^2-\phi\alpha \mathds{1}_{\rmK}(e), \\ g(e, \boldsymbol{z}, (\phi,\psi)) &= 0,
    \end{aligned}
\end{equation}
\normalsize
where $\mathds{1}_{\rmK}(\cdot)$ denotes the indicator of state K. The first term of running cost $f$ is the penalty on excessive effort on communication or the opportunity costs from insufficient communication levels, where we assume the natural communication rate equals to $1$. The second term is a reward offered by the principal for staying in the preferred (K) state. For simplicity, terminal cost are set as 0 for this motivating example; however, it can be easily generalized to include state, aggregate or, policy dependence. The state process of minor agents follows a continuous-time Markov chain (CTMC). Specifically, the transition rate matrix of agent $x \in I$ is 
\small
\begin{equation*}
Q^{x}(\alpha, \boldsymbol{z}, (\phi,\psi)) = 
\begin{blockarray}{cccccc}
& \rmS & \rmK & \rmI & \rmR \\
\begin{block}{c(ccccc)}
  \rmS & \cdots & \beta_{\rmS} \alpha z_{\rmK}+\psi & \beta_{\rmS} \alpha z_{\rmI} & 0 \\
  \rmK & 0 & \cdots & \beta_{\rmK} \alpha z_{\rmI} & \mu_{\rmK} \\
  \rmI & 0 & \beta_{\rmI} \alpha z_{\rmK}+\psi & \cdots & \mu_{\rmI} \\
  \rmR & \eta & 0 & 0 & \cdots \\
\end{block}
\end{blockarray}
\end{equation*}
\normalsize
where $\cdots$ represents the negative of the sum of all other entries in that row to ensure the row sum to be $0$. The principal is affecting the minor agent dynamics through another control $\psi$, in a way of increasing the transition rate into her preferred (K) state. We then define the minor agents' game equilibrium notion.

\begin{define}
    \label{def:ggne}
    Let $\lambda \in \bLambda$ be an admissible policy of the principal. We call a minor agent strategy profile, $\boldsymbol{\alpha}^{\lambda}$, a \textit{graphon game Nash equilibrium} (GGNE) given the policy $\lambda$ if no agent can gain from a unilateral deviation, i.e. $\forall x \in I, \forall \boldsymbol{\sigma} \in \mathbb{A}$:
\small
\begin{equation*}
    \label{nash-eq-1P}    \mathcal{J}^x\big(\boldsymbol{\alpha}^{\lambda,x} ;\boldsymbol{Z}^{\boldsymbol{\alpha}^{\lambda}, x}, \lambda\big) \leq \mathcal{J}^x\big(\boldsymbol{\sigma} ;\boldsymbol{Z}^{\boldsymbol{\alpha}^{\lambda}, x}, \lambda\big) .
\end{equation*}
\normalsize
\end{define}

\subsection{Principal's Problem}
In the previous section, we mentioned the principal shifts the population opinion towards her favor via policy $\lambda = (\phi_t, \psi_t)_{t\in[0,T]}$. This setting is motivated by two main observations: (i) principal can reward agents in real life: such as during COVID-19, the UK and U.S. paid social-media influencers to promote the benefits of NHS test and vaccination, where compensation and privileged access implicitly/explicitly raised the payoff for amplifying the preferred message~\cite{uk_nhs_2020,colorado_vax_2021}. (ii) Principal can impact the transition rates indirectly in real life: such as governments or media outlets may budget advertisement or create policies to indirectly shift transition rates. For example, in the U.S., parties commonly rely on allied outlets to circulate their messages more rapidly (e.g. use of Facebook in 2012 Presidential Campaign~\cite{obama-campaign-2012}); in China, regulators require recommendation algorithms of social platforms to align with values of the ruling party~\cite{china_algo_2022}; and in Turkey, advertising budgets are directed to pro-government outlets and cut from critical outlets~\cite{turkey_news}.

Let $p^{x, \lambda}(t):=\left(p^{x}(t, e)\right)_{t \in [0,T], e \in E}$ be the state distribution flow of agent $x \in I$ with initial state distribution $p_{0}^{x}$ under strategy profile $\boldsymbol{\alpha}$. We argue that this flow with fixed $\lambda \in \boldsymbol{\Lambda}$ solves the Kolmogorov-Fokker-Planck (KFP) forward equation:
\small
\begin{equation}
    \label{eq:forward-kolm}
    \frac{d}{d t} p^{x,\lambda}(t)=p^{x,\lambda}(t) Q^x\left(\alpha_t^{x}, \bZ_{t}^{\bsalpha,x},\lambda_t\right),
\end{equation}
\normalsize
with initial condition $p^{x,\lambda}(0) = p_{0}^{x}$, where $\bZ_{t}^{\bsalpha,x} = (Z^{\bsalpha,x}_{t, \rmK}, Z^{\bsalpha,x}_{t,\rmI})$. Define the average population density flow ${p}_e^{\lambda}(t):=\int_{I} p^{x,\lambda}(t,e) dx, e \in \{\rmK, \rmI\}$, the principal's objective is to minimize the following cost functional over $\lambda=(\lambda_t)_{t\in [0,T]}$:
\small
\begin{equation}
    \label{principalcost-1P}
    \begin{aligned}
    \cJ_{0}(\lambda):=\int_0^T\left[c_{\lambda}\| \lambda_t\|^2-\hat{p}^{\lambda}_{\rmK}(t)+\hat{p}^{\lambda}_{\rmI}(t)\right] d t,
    \end{aligned}
\end{equation}
\normalsize
where $c_{\lambda}>0$ is a scaling constant for the cost of imposing the policy, and $\hat{p}^{\lambda}_{\rmK}$ and $\hat{p}^{\lambda}_{\rmI}$ are the population density flows for state K and I under graphon game Nash equilibrium control given $\lambda$, i.e, $\bsalpha^\lambda$ (see definition~\ref{def:ggne}). We remark that instead of taking it exogenously, $p^{x}_{0}$ could also be a control of the principal, which creates a constrained optimization problem which is beyond the scope of this work. The principal’s objective rewards the proportion of agents in the preferred state (term 2) while penalizes both the proportion in the non-preferred state (term 3) and regulatory effort (term 1). We are now ready to define the equilibrium for the leader–follower setting.
\begin{define}
\label{def:stackelberg_graphon_nash_single}
 We call a principal policy $\lambda^*$ a (single-principal) Stackelberg graphon game equilibrium (SGGE) if $\lambda^*\in \argmin_{\lambda\in\bLambda} \cJ_{0}(\lambda)$.
\end{define}

\begin{remark}
    We emphasize that between the principal and the mean-field population, we are looking for a Stackelberg equilibrium rather than a Nash equilibrium. In the Stackelberg equilibrium, the principal takes the population response (through the population density of states in this particular model of interest) as a function of her own control and then optimizes her own objectives accordingly. While in a Nash setting, the principal treat the population density exogenous/independent of her control, and give her best response. Achieving a Nash equilibrium would then require solving a fixed-point problem between the principal’s and the population’s best-response mappings. A Nash equilibrium in our setting is an uninteresting problem from both application and mathematical perspectives. On the former side, the Stackelberg equilibrium creates an information hierarchy for the principal which is a good fit for public policy or mechanism design applications. On the latter side, if the population density flows are taken exogenous, the best response of the principal is trivially $\lambda_t=0$ for all $t\in[0,T]$.
\end{remark}

\section{Main Theoretical Results}
\label{sec:theory}
Given a fixed flow $\bZ^x= (\bZ^x)_{\rmK,\rmI}$ and policy of principal $\lambda$, we define the value function of agent $x \in I$ in state $e \in E$ at time $t \in [0,T]$ 
\small
\begin{equation}
\begin{aligned}
u^{x,\lambda}(t, e):=&\inf_{\alpha^x \in \mathbb{A}} \mathbb{E}\Bigg[\int_t^T f^x\Big(s, X_s^{x}, \boldsymbol{Z}_{s}^{x}, \alpha_s^x, \lambda_{s}\Big) d s \\
& \quad\quad\quad\quad\quad+g^x\left(X_T^{x}, \boldsymbol{Z}_{T}^{x},\lambda_T\right) \mid X_t^x=e\Bigg] .
\end{aligned}
\end{equation}
\normalsize
We further denote $q_t\left(e, e^{\prime},\alpha,\boldsymbol{z}, \lambda\right)$ as the element of Q-matrix that gives the transition rate from state $e$ to $e'$.

\begin{assumption}
    \label{assu:1}
    \begin{itemize}
        \item[(i).] We assume $\forall x \in I, t \in [0,T]$, agent x's control is in a bounded set, i.e., $\alpha_{t}^{x} \in [0, \bar{A}]$ with a constant $\bar{A} > 0$; and policy of principal $\lambda:[0,T] \rightarrow \mathbb{R}^2$ is continuous with respect to $t$.
        \item[(ii).] We assume that policy of principal $\lambda:[0,T] \rightarrow \mathbb{R}_{+}^2$ is lipschitz continuous with respect to $t$. Writing $\lambda_t = (\phi_t,\psi_t)$, we assume the components are uniformly bounded: $|\phi_t| \leq \bar{\phi}$ and $|\psi_t| \leq \bar{\psi}$ for all $t \in [0,T]$.
    \end{itemize}
\end{assumption}
We characterize GGNE via an FBODE system under fixed policy; for notational simplicity, we drop superscript on $\lambda$.
\begin{theorem}[GGNE Characterization]
    \label{thm:characterization}
    Under Assumption~\ref{assu:1}.(i), given policy of principal, $\lambda=(\phi_t, \psi_t)_{t\in[0,T]} \in \mathbf{\Lambda}$, the graphon game Nash equilibrium control is given as $\hat{\theta}^{x}(t, e) := \hat{\alpha}^{x,\lambda}_t\left(e,\boldsymbol{z}, u^x(t, \cdot)\right)$ for agent $x \in I$, $\forall e \in E \text{ and } t \in [0,T]$, where
    \small
    \begin{equation}
        \label{eq:opt-cons}
        \begin{aligned} & \hat{\theta}^{x}(t, \rmS)=\beta_{\rmS} Z^{x}_{t,\rmK}(u^{x}(t, \rmS)-u^{x}(t, \rmK))\\&\quad\quad\quad\quad\quad\quad+\beta_{\rmS} Z^{x}_{t,\rmI}(u^{x}(t, \rmS)-u^{x}(t, \rmI))+1 \\ & \hat{\theta}^{x}(t, \rmK)=\beta_{\rmK} Z^{x}_{t,\rmI}(u^{x}(t, \rmK)-u^{x}(t, \rmI))+ 1 +\phi_t \\ & \hat{\theta}^{x}(t, \rmI)=\beta_{\rmI} Z^{x}_{t,\rmK}(u^{x}(t, \rmI)-u^{x}(t, \rmK))+1 \\ & \hat{\theta}^{x}(t, \rmR)=1\end{aligned}
    \end{equation}
    \normalsize
    if the couple $(u, p)$ solves the following FBODE system
    \small
    {
        \allowdisplaybreaks
        \begin{align*}
        \dot{p}^{x}(t, e)&=\sum_{e^{\prime} \in E} p^x(t, \cdot) q_t^{x}\left(e^{\prime}, e, \hat{\theta}^x(t, \cdot),Z_{t, \rmK}^{x}, Z_{t, \rmI}^{x},\lambda_t\right), e \in E \\
\dot{u}^{x}(t, \rmS)&=\beta_{\rmS} Z^{x}_{t,\rmK} \hat{\theta}^{x}(t, \rmS)(u^{x}(t, \rmS)-u^{x}(t, \rmK))\\&\quad+\beta_{\rmS} Z^{x}_{t,\rmI} \hat{\theta}^{x}(t, \rmS)(u^{x}(t, \rmS)-u^{x}(t, \rmI)) \\
&\quad+\psi_t(u^{x}(t,\rmS)-u^{x}(t,\rmK))-\frac{1}{2}(1-\hat{\theta}^{x}(t, \rmS))^2 \\
\dot{u}^{x}(t, \rmK)&=\mu_{\rmK}(u^{x}(t, \rmK)-u^{x}(t, \rmR))-\frac{1}{2}(1-\hat{\theta}^{x}(t, \rmK))^2\\&\quad+\beta_{\rmK} \hat{\theta}^{x}(t, \rmK) Z_{t,\rmI}^{x}(u^{x}(t, \rmK)-u^{x}(t, \rmI))+\phi_{t}\hat{\theta}^{x}(t,\rmK) \\
\dot{u}^{x}(t, \rmI)&=\mu_{\rmI}(u^{x}(t, \rmI)-u^{x}(t, \rmR))-\frac{1}{2}(1-\hat{\theta}^{x}(t, \rmI))^2\\&\quad+\beta_{\rmI} \hat{\theta}^{x}(t, \rmI) Z_{t,\rmK}^{x}(u^{x}(t, \rmI)-u^{x}(t, \rmK)) \\
&\quad+\psi_t(u^{x}(t,\rmI)-u^{x}(t,\rmK))\\
\dot{u}^{x}(t, \rmR)&=\eta(u^{x}(t, \rmR)-u^{x}(t, \rmS)) - \frac{1}{2}(1-\hat{\theta}^{x}(t, \rmR))^2 \\
Z_{t, \rmK}^{x}&=\int_I w(x, y) \hat{\theta}^{y}(t,\rmK) p^{y}(t, \rmK) dy\\
        Z_{t, \rmI}^{x}&=\int_I w(x, y) \hat{\theta}^{y}(t,\rmI) p^{y}(t, \rmI) dy\\
     u^x(T, e)&=0, p^x(0, e)=p_0^x(e),\quad\forall e \in E, \quad\forall x\in I.
        \end{align*}
    }
    \normalsize
    
\end{theorem}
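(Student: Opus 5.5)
We argue by verification, separating the individual optimal control problem from the fixed-point (consistency) condition. First we fix the policy $\lambda=(\phi_t,\psi_t)$ and an arbitrary aggregate flow $\bZ^x=(Z^x_{t,\rmK},Z^x_{t,\rmI})_{t\in[0,T]}$, continuous in $t$. Then agent $x$ faces a standard finite-state continuous-time Markov decision problem with time-dependent rates $q_t(e,e',\alpha,\bZ^x_t,\lambda_t)$ and running cost $f$ from \eqref{playercosts-1P}. Its value function $u^{x}(t,\cdot)$ should solve the backward HJB system
\[
-\dot u^{x}(t,e)=\inf_{\alpha\in A}\Big\{ f(t,e,\bZ^x_t,\alpha,\lambda_t)+\sum_{e'\neq e} q_t(e,e',\alpha,\bZ^x_t,\lambda_t)\big(u^{x}(t,e')-u^{x}(t,e)\big)\Big\},
\]
with $u^x(T,\cdot)=0$. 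We compute the Hamiltonian state by state.

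\textbf{State S.} The $\alpha$-dependent part is
\[
\tfrac12(1-\alpha)^2+\alpha\big[\beta_\rmS Z_{\rmK}(u(\rmK)-u(\rmS))+\beta_\rmS Z_{\rmI}(u(\rmI)-u(\rmS))\big],
\]
plus the $\alpha$-free term $\psi_t(u(\rmK)-u(\rmS))$.

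\textbf{State K.} The $\alpha$-dependent part is
\[
\tfrac12(1-\alpha)^2-\phi_t\alpha+\alpha\beta_\rmK Z_\rmI(u(\rmI)-u(\rmK)),
\]
plus $\mu_\rmK(u(\rmR)-u(\rmK))$.

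\textbf{States I and R} are handled analogously.

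Each Hamiltonian is strictly convex and quadratic in $\alpha$. The first-order condition $-(1-\alpha)+(\text{coefficient})=0$ gives exactly the expressions for $\hat\theta^x(t,e)$ in \eqref{eq:opt-cons}; for instance, the sign conventions produce $1+\phi_t$ in state K and $1$ in state R. Substituting $\hat\theta$ back uses $\tfrac12(1-\hat\theta)^2+\hat\theta\,c=-\tfrac12(1-\hat\theta)^2+\ldots$. More precisely, $\tfrac12(1-\hat\theta)^2-\hat\theta(1-\hat\theta)$ rearranged yields the stated form of $\dot u^{x}(t,e)$, including the $\psi_t$ terms and the $\phi_t\hat\theta^x(t,\rmK)$ term. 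This step is routine algebra, and we will simply record the resulting equations.

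Next comes the verification step. Given a solution $u^x$ of the backward system, which is $C^1$ in $t$, we apply Dynkin's formula to $u^x(t,X^x_t)$ along the chain controlled by an arbitrary admissible $\sigma\in\mathbb{A}$. Since the generator term is at least $-f$ pointwise, by minimization of the Hamiltonian, this gives $u^x(0,\cdot)$ averaged over $p^x_0$ $\le\cJ^x(\sigma;\bZ^x,\lambda)$. Equality holds for the feedback $\hat\theta^x$. Hence $\hat\theta^x$ is an optimal Markov feedback for agent $x$ against $\bZ^x$.

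The main obstacle is the control constraint of Assumption~\ref{assu:1}(i). The true minimizer over $[0,\bar A]$ is the projection of the unconstrained formula. We therefore need to note either that the formulas are understood with truncation to $[0,\bar A]$, or that $\bar A$ is taken large enough that, along the solution, the unconstrained minimizers stay in $[0,\bar A]$. We would handle this with an a priori bound on $u^x$ and $Z^x$: the rates and costs are bounded, so $|u^x|\le C(T,\bar\phi,\dots)$ and $0\le Z^x\le \bar A$. Dynkin's formula itself needs only boundedness of the rates, which follows from $\alpha\in[0,\bar A]$, $w\le1$, and $\beta\le\bar\beta$.

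Finally, we close the loop. Under $\hat\theta^x$, the law $p^x(t,\cdot)$ of $X^x_t$ satisfies the KFP equation \eqref{eq:forward-kolm}, which is the forward line of the system. By \eqref{agg-def}, the aggregates generated by the profile $\hat{\bsalpha}=(\hat\theta^y)_y$ are $\int_I w(x,y)\hat\theta^y(t,e)p^y(t,e)\,dy$ for $e\in\{\rmK,\rmI\}$. These coincide with the $Z^x$ used in the HJB equation precisely because $(u,p)$ solves the coupled FBODE. Thus each agent best-responds to the aggregate induced by $\hat{\bsalpha}$, which is Definition~\ref{def:ggne}. Measurability in $x$ of $(u^x,p^x)$ is assumed as part of the notion of solution, so the integrals defining $Z^x$ make sense.
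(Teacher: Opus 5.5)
Your proposal follows the same route as the paper's proof: state-by-state first-order minimization of the Hamiltonian to get \eqref{eq:opt-cons}, the HJB $\dot u^x+H^x(\hat\theta^x)=0$ with $u^x(T,\cdot)=0$, and coupling with the KFP equation and the aggregate consistency; your Dynkin-formula verification is a sound sharpening of the paper's bare appeal to the dynamic programming principle, since the statement is a sufficiency (``if'') claim. (No substitution identity is actually needed, because the displayed backward equations are literally $-H^x$ evaluated at $\hat\theta^x$; the identity you wrote also has a sign slip, since the first-order condition makes the $\alpha$-coefficient equal to $1-\hat\theta$, giving $+\hat\theta(1-\hat\theta)$.) The control-constraint issue you flag is genuine and the paper passes over it, but enlarging $\bar A$ does not resolve it: with your bound $Z^x\le\bar A$, the deviation of $\hat\theta^x$ from $1$ (or $1+\phi_t$) grows with $\bar A$, and the lower constraint $\hat\theta^x\ge 0$ is not helped at all, so you need either truncated feedbacks (which changes the formulas) or an explicit hypothesis or smallness condition, in the spirit of \eqref{eq:short_time_con}, guaranteeing $\hat\theta^x\in[0,\bar A]$.
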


\begin{proof}
    We extend the results in~\cite[Section 7.2]{carmona_MFG_booki} and write a finite-state version of the Hamilton–Jacobi–Bellman (HJB) and Kolmogorov–Fokker–Planck (KFP) equations system for a continuum of agents. We first provide optimality condition for communication level of agent $x$ by writing her Hamiltonian
    \small $$\begin{aligned}
& H^x\left(t, e,\boldsymbol{z}, u, \alpha\right) \\
& \quad=\sum_{e^{\prime} \in E} q_t^x\left(e, e^{\prime},\alpha,\boldsymbol{z},\lambda\right) u\left(e^{\prime}\right)+f^x\left(t, e,\boldsymbol{z}, \alpha,\lambda\right).
\end{aligned}$$\normalsize
By strong convexity of the running cost $f^x$ on control, the mapping $\alpha \mapsto H^x\left(t, e,\boldsymbol{z}, u, \alpha\right)$ admits an unique measurable minimizer $\hat{\theta}^{x,\lambda}(t, e)  := \hat{\alpha}^{x,\lambda}_t\left(e,\boldsymbol{z}, u^x(t, \cdot)\right), e\in E$. We use first-order optimality condition to compute them explicitly and derive~\eqref{eq:opt-cons}. By dynamic programming principle of optimal control, the HJB equations can be written as
\small
\begin{equation}
    \label{eq:hjb_abbrev}
    \dot{u}^{x}(t,e)+H\left(t, e,\boldsymbol{z}, u^x(t, \cdot), \hat{\theta}^{x}(t, e) \right) = 0,
\end{equation}
\normalsize
with terminal conditions $u^{x}(T,e)=0, e \in E$. We remark that HJB is a partial differential equation; on the finite state space, it reduces to an ODE system. We plug in using the definition of Hamiltonian to explicitly derive the backward equations of the FBODE system, and couple them with the KFP equation in~\eqref{eq:forward-kolm}. This completes the proof.
\end{proof}

Next, we state the existence of solution for the FBODE system.

\begin{theorem}[FBODE existence]
    \label{thm:existence}
    Under Assumption~\ref{assu:1}.(ii), given principal's policy $\lambda \in \mathbf{\Lambda}$, if the horizon time is short enough such that \small
    \begin{equation}
        \label{eq:short_time_con}
        T \bar{\beta}\Big[\frac{1}{2}\big((\bar{A}-1)^2\vee1\big)+\bar{\phi}\bar{A}\Big]<1,
    \end{equation}
    \normalsize
    there exists a bounded solution $(u(t,e),p(t,e))_{t\in[0,T],e\in E}$ to the continuum FBODE system stated in Theorem~\ref{thm:characterization}.
\end{theorem}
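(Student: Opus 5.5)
We would prove Theorem~\ref{thm:existence} with a fixed-point argument on the flows of the backward (HJB) components, using the short-horizon condition \eqref{eq:short_time_con} to obtain a contraction. The controls are truncated to $[0,\bar A]$ as in Assumption~\ref{assu:1}.(i), so that $\hat\theta^x(t,e)$ is the projection onto $[0,\bar A]$ of the expressions in \eqref{eq:opt-cons}. We work in the Banach space $\mathcal{U}$ of measurable maps $x\mapsto u^x(\cdot,\cdot)\in C([0,T];\RR^{E})$ with finite norm $\|u\|:=\sup_{x\in I}\sup_{t\in[0,T]}\max_{e\in E}|u^x(t,e)|$. Given $u\in\mathcal U$, we first build the forward part, then return to the backward part, and define a map $\Gamma:u\mapsto\tilde u$.

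\textbf{Step 1 (forward part).} For fixed $u$, the controls $\hat\theta^x$ depend on $Z^x$, and $Z^x$ depends on $p$ and $\hat\theta$. For fixed $u$, the forward KFP system coupled with the aggregates is an ODE in the Banach space $L^\infty(I;\RR^E)$ in the variable $p$. Its vector field is locally Lipschitz. This holds because projection is 1-Lipschitz, $|Z^x_{t,j}|\le\bar A$ since $w\le1$ and $p$ is a probability vector, and $\psi$ is bounded. The probability simplex is invariant, since the rates are nonnegative given $\psi\ge0$ and $\hat\theta\ge0$. Hence a unique global solution $p$ exists on $[0,T]$, and so the aggregates $Z^x_{t,\rmK},Z^x_{t,\rmI}\in[0,\bar A]$ are determined.

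\textbf{Step 2 (backward part and a priori bound).} With $Z$ frozen, we solve the backward HJB ODE for $\tilde u^x$ with $\tilde u^x(T,\cdot)=0$. This gives $\Gamma(u)=\tilde u$. We write the HJB in the equivalent form $\dot{\tilde u}^x(t,e)=-\min_{a\in[0,\bar A]}H^x(t,e,Z,\tilde u,a)$. This is a Lipschitz ODE in $\tilde u$, since it is a minimum of affine functions with bounded coefficients, so it is well posed. The running cost satisfies $|f|\le \frac12((\bar A-1)^2\vee1)+\bar\phi\bar A$ on $[0,\bar A]$. A verification/comparison argument then gives $|\tilde u^x(t,e)|\le T[\frac12((\bar A-1)^2\vee1)+\bar\phi\bar A]$. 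This bound explains the quantity appearing in \eqref{eq:short_time_con}, and it shows that $\Gamma$ maps the ball $B_R$ of radius $R:=T[\cdots]$ into itself.

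\textbf{Step 3 (contraction and conclusion; main obstacle).} We show that $\Gamma$ is a contraction on $B_R$. If $u_1,u_2\in B_R$, they produce controls, densities and aggregates whose differences are controlled by $\|u_1-u_2\|$. In particular the controls satisfy $|\hat\theta_1-\hat\theta_2|\le \bar\beta\bar A\cdot 2\|u_1-u_2\|$, and the densities are handled by Gr\"onwall. The difference of the backward solutions is then estimated by integrating over $[0,T]$. This is where the product $T\bar\beta[\cdots]$ should emerge as the Lipschitz constant, with \eqref{eq:short_time_con} making it $<1$. We expect this step to be the main obstacle. The Gr\"onwall constants from the forward equation and the quadratic term $\frac12(1-\hat\theta)^2$ must be absorbed, which requires careful use of the bounds $|Z|\le\bar A$ and $|u|\le R$. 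If a clean contraction fails to produce exactly \eqref{eq:short_time_con}, we would fall back on Schauder's theorem. In that route, $\Gamma(B_R)\subset B_R$ comes from Step 2. Equicontinuity in $t$ comes from bounded derivatives. Compactness in $x$ requires regularity of $w$ and $p_0^x$, or else we work in a weak topology with continuity of $\Gamma$ obtained by dominated convergence. A fixed point $u=\Gamma(u)$ together with its associated $p$ is a bounded solution of the FBODE system of Theorem~\ref{thm:characterization}.
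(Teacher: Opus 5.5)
\textbf{There is a genuine gap in your Step 1, and it sits exactly where the hypothesis \eqref{eq:short_time_con} is needed.}

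You note that $\hat\theta$ depends on $Z$ and that $Z$ depends on $\hat\theta$ and $p$. You then treat the forward system as a Lipschitz ODE in $p$ alone. It is not one until this algebraic loop is resolved. At each time one must solve $Z=\mathcal{T}^{(u,p)}(Z)$, where $\hat\theta^y(t,\rmK)$ depends on $Z^y_{t,\rmI}$ and $\hat\theta^y(t,\rmI)$ depends on $Z^y_{t,\rmK}$. The facts you cite (projection is $1$-Lipschitz, $|Z|\le\bar A$, $\psi$ bounded) give neither a unique such $Z$ nor its Lipschitz dependence on $p$. So the vector field of your KFP is not yet defined.

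The paper's Step 1 supplies this missing piece. Because $w\le 1$ and $p\le 1$, the map $Z\mapsto\mathcal{T}^{(u,p)}(Z)$ is Lipschitz with constant $\bar\beta\sup|u(t,\rmK)-u(t,\rmI)|$. Since $\phi\ge 0$, the running cost satisfies $f\in[-\bar\phi\bar A,\frac12((\bar A-1)^2\vee1)]$ on $[0,\bar A]$. Hence the value function satisfies $-T\bar\phi\bar A\le u\le \frac T2((\bar A-1)^2\vee 1)$. Its oscillation is therefore at most $T[\frac12((\bar A-1)^2\vee1)+\bar\phi\bar A]$, and the contraction constant $C_Z$ is below $1$ precisely under \eqref{eq:short_time_con}.

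Banach's theorem then gives a unique $\hat Z$. The estimate $\|\hat Z(p)-\hat Z(p')\|\le (1-C_Z)^{-1}\|\mathcal{T}^{(u,p)}\hat Z(p')-\mathcal{T}^{(u,p')}\hat Z(p')\|\le \bar A(1-C_Z)^{-1}\|p-p'\|$ gives the Lipschitz dependence, which would repair your Step 1. Note that your symmetric ball $|u|\le R$ only bounds the oscillation by $2R$, a factor $2$ too weak. You need to work on the invariant set defined by the one-sided bounds above.

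\textbf{As a consequence, Step 3 looks for \eqref{eq:short_time_con} in the wrong place.} A contraction estimate for $\Gamma$ in $u$ would carry the Gr\"onwall factors of the forward equation and the factor $(1-C_Z)^{-1}$. It will not reduce to that condition.

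The paper does not attempt such a contraction. After the aggregate contraction, it computes $\hat Z$ from the input pair $(u,p)$, then solves KFP and HJB. It applies Schauder's theorem to $\mathcal{O}:(u,p)\mapsto(\hat u,\hat p)$ on $\mathcal{K}_{C_1}$. Compactness comes from Arzel\`a--Ascoli, and continuity of $\hat p,\hat u$ follows from that of $\hat Z$ via Gr\"onwall.

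Your fallback is essentially this route, but as written it is left open. Your concern about compactness in $x$ is legitimate; the paper's appeal to Arzel\`a--Ascoli does not spell this out either. Working in a weak topology does not obviously help. The map is nonlinear pointwise in $x$, for example through $Z^x(u^x(t,\rmK)-u^x(t,\rmI))$ and $\hat\theta p$. Dominated convergence needs pointwise convergence, which weak convergence does not provide.
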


\begin{proof}
    With $\lambda$ fixed, we treat it as an exogenous parameter to the FBODE system. The solution space we will work on is defined, for $C_1 > 0$,  \small$$\begin{aligned}
        \mathcal{K}_{C_1}:= &\Big\{(u, p) \in C\left([0, T] ; L^2(I \times E) \times L^2(I \times E)\right): \\ &\sum_{e \in E} p^x(t, e)=1, p^x(t, e) \geqslant 0 \ \forall e \in E,\|(u, p)\|_{\infty} \leqslant C_1\Big\}.
    \end{aligned}$$\normalsize \\
    \textbf{Step 1. } We show for any process pair $\mathcal{K}_{C_1} \ni (\boldsymbol{u},\boldsymbol{p}) = (u^x(t,e), p^x(t,e))_{x\in I, t \in [0,T], e \in E}$, the aggregate mapping: 
    \small
    $$\begin{aligned} & \mathcal{T}^{(u, p)}\left(\left(Z_{t, \rmK}^x, Z_{t, \rmI}^x\right)_{t \in[0, T], x \in I}\right)= \\ & \left(\int_I w(x, y) \hat{\theta}^y(t, \rmK) p^y(t, \rmK) d y, \int_I w(x, y) \hat{\theta}^y(t, \rmI) p^y(t, \rmI) d y\right),\end{aligned}$$
    \normalsize
    is a contraction under short-time condition in~\eqref{eq:short_time_con}. It is built on a complete normed aggregate space $(\mathcal{Z}, \|\cdot\|_{\mathcal{Z}})$, where \small $$
    \begin{aligned}
\mathcal{Z}
:=&\ \Bigl\{f \in C\bigl([0,T]; L^1(I;\mathbb{R}^2)\bigr): \\
& |f_t^x|_{[1]}\le \bar A \ \text{and}\ |f_t^x|_{[2]}\le \bar A,\ \ t\in[0,T],\ \text{a.e. }x\in I \Bigr\},
\end{aligned}
$$\normalsize and $\|f\|_{\mathcal{Z}}:=\sup _{t \in[0, T]} \int_I|f(t)(x)| d x$. By Banach fixed point theorem, we proved the unique existence of fixed point for $\mathcal{T}$ for any fixed $(\boldsymbol{u},\boldsymbol{p})$, denoted $\boldsymbol{\hat{Z}}$. \\
\textbf{Step 2. } Given $\boldsymbol{\hat{Z}}$ and $\boldsymbol{u}$, we solve KFP to obtain $\hat{\boldsymbol{p}}$. Then, with $\boldsymbol{\hat{Z}}$ and $\hat{\boldsymbol{p}}$, we solve HJB to obtain $\hat{\boldsymbol{u}}$. The existence and uniqueness of solutions in both steps follow from the Cauchy-Lipschitz-Picard theorem~\cite[Theorem 7.3]{brezis2010functional}. Together with step 1, this defines the mapping $\mathcal{O}: (\boldsymbol{u}, \boldsymbol{p})\mapsto (\hat{\boldsymbol{u}}, \hat{\boldsymbol{p}})$ on $\mathcal{K}_{C_1}$ for large enough $C_1$. By Arzela-Ascoli theorem, $\mathcal{O}(\mathcal{K}_{C_1})$ is compact. In order to apply Schauder fixed-point theorem, it left to verify the continuity of $\boldsymbol{\hat{Z}}$, $\boldsymbol{\hat{p}}$ and $\hat{\boldsymbol{u}}$. We show the continuity of $\boldsymbol{\hat{Z}}$ in the main text, and the rest two continuity results hold similarly using the continuity of $\boldsymbol{\hat{Z}}$, the Assumption~\ref{assu:1}, and Gr\"onwall inequality. We establish continuity of $\boldsymbol{\hat{Z}}$ by considering a sequence $(u^{n}, p^n)_{n \in \mathbb{N}}$ and $(u, p)$ in $\mathcal{K}_{C_1}$ such that $\lim _{n \rightarrow \infty}\left(u^n, p^n\right)=(u, p)$. 
Denote the corresponding aggregate operator as $\mathcal{T}^{n}:=\mathcal{T}^{(u^n,p^n)}$ and $\mathcal{T}:=\mathcal{T}^{(u,p)}$; denote the aggregates they induce as $(\hat{Z}^{n, x}_{\rmK}, \hat{Z}^{n, x}_{\rmI})$ and $(\hat{Z}^{x}_{\rmK}, \hat{Z}^{x}_{\rmI})$, $n \in \mathbb{N}$. Let the contraction constant for the aggregate operator from step 1 be $C_Z < 1$. We first have
{\small
\begin{equation*}
    \begin{aligned}
        \|\hat{Z}^n - \hat{Z}\|_{\mathcal{Z}} &= \|\mathcal{T}^n\hat{Z}^n - \mathcal{T}\hat{Z}\|_{\mathcal{Z}} \\ & \leq \|\mathcal{T}^n \hat{Z}^n - \mathcal{T}^n\hat{Z}\|_{\mathcal{Z}} + \|\mathcal{T}^n \hat{Z} - \mathcal{T}\hat{Z}\|_{\mathcal{Z}} \\
        & \leq C_{Z} \|\hat{Z}^n - \hat{Z}\|_{\mathcal{Z}} + \|\mathcal{T}^n \hat{Z} - \mathcal{T}\hat{Z}\|_{\mathcal{Z}}.
    \end{aligned}
\end{equation*}
}
Therefore, there is
{\small
$$\|\hat{Z}^n - \hat{Z}\|_{\mathcal{Z}} \leq \frac{1}{1-C_Z}\|\mathcal{T}^n \hat{Z} - \mathcal{T}\hat{Z}\|_{\mathcal{Z}}.$$}
It suffices to bound the right hand side. Consider state $\rmK$ without loss of generality, we write the control notation back as $\hat{\alpha}$ to include $(Z,u)$ and obtain
{\small
\begin{equation*}
\begin{aligned}
&|(\mathcal{T}^n\hat{Z})_{t,\rmK} - (\mathcal{T}\hat{Z})_{t,\rmK}| \\
&\leq \int_I w(x,y) \Big|\hat{\alpha}^y_t(\rmK,\hat{Z},u_n) p^{n,y}(t,\rmK) \\ &\hspace{4.2cm}-\hat{\alpha}_t^y(\rmK,\hat{Z},u) p^y(t,\rmK)\Big| dy \\
&\leq \int_I \Big[ w(x,y) \big|\hat{\alpha}_t^y(\rmK,\hat{Z},u_n) - \hat{\alpha}^y_t(\rmK,\hat{Z},u)\big| \big|p^{n,y}(t,\rmK)| \\
&\hspace{1.2cm} + w(x,y) \big|\hat{\alpha}_t^y(\rmK,\hat{Z},u)\big| \big|p^{n,y}(t,\rmK) - p^y(t,\rmK)\big| \Big] dy
\end{aligned}
\end{equation*}
}
The bound goes to 0 as $n \rightarrow \infty$, and similarly hold for state I. We conclude with existence of fixed point for $\mathcal{O}$.
\end{proof}

\section{Multi-Principal Model}
\label{sec:model_multi}
Now, we introduce the model with multiple leaders. We assume that there are two principals: Principal $\cK$ and Principal $\cI$ with controls $\lambda_{\cK}:=(\phi_{\cK,t}, \psi_{\cK,t})_{t\in[0,T]} \in \boldsymbol{\Lambda}$ and $\lambda_{\cI}:=(\phi_{\cK,t}, \psi_{\cK,t})_{t\in[0,T]} \in \boldsymbol{\Lambda}$, respectively. Their controls $(\lambda_{\cK}, \lambda_{\cI})$ affects the minor agents' game via cost functions and dynamics, and they compete with each other by incentivizing the population towards the states they prefer, with principal $\cK$ favoring state $\rmK$ and principal $\cI$ favoring state $\rmI$. We will first start by stating the changes needed in the minor agents' model. Since now both principals encourage transitions into their own preferred states, the Q-matrix that describes the state process of agent $x$ becomes
\small
\begin{equation*}
\begin{aligned}
Q^{x}_2(\alpha,\boldsymbol{z},&(\phi,\psi)_{\cK,\cI}) ={} \\
&\begin{blockarray}{cccccc}
& \rmS & \rmK & \rmI & \rmR \\
\begin{block}{c(ccccc)}
  \rmS & \cdots & \beta_{\rmS}\alpha z_{\rmK}+\psi_{\cK} & \beta_{\rmS}\alpha z_{\rmI}+\psi_{\cI} & 0 \\
  \rmK & 0 & \cdots & \beta_{\rmK}\alpha z_{\rmI}+\psi_{\cI} & \mu_{\rmK} \\
  \rmI & 0 & \beta_{\rmI}\alpha z_{\rmK}+\psi_{\cK} & \cdots & \mu_{\rmI} \\
  \rmR & \eta & 0 & 0 & \cdots \\
\end{block}
\end{blockarray}
\end{aligned}
\end{equation*}
\normalsize
Minor agents are rewarded by the two principals when they actively spreading their preferred news. The running and terminal costs of the agent updates as
\small
\begin{equation}
    \label{playercosts-2P}
    \begin{aligned}
        f\left(t, e, \boldsymbol{z}, \alpha, (\phi,\psi)_{\cK,\cI}\right)&=\frac{1}{2}(1-\alpha)^2-\phi_{\cK}\alpha \mathds{1}_{\rmK}(e)-\phi_{\cI}\alpha \mathds{1}_{\rmI}(e), \\ g(e, \boldsymbol{z}, (\phi,\psi)_{\cK,\cI}) &= 0.
    \end{aligned}
\end{equation}
\normalsize
On the other hand, the two principals pay implementation costs associated with their policies and obtain rewards based on the densities of their preferred states under GGNE. Their optimization problems are formulated as
\small
\begin{equation}
    \begin{aligned}
    &\min _{\lambda_{\cK}} \mathcal{J}_1(\lambda_{\cK};\lambda_{\cI}) \text{ for Principal $\cK$}\hskip0.08cm \min _{\lambda_{\cI}} \mathcal{J}_2(\lambda_{\cI};\lambda_{\cK}) \text{ for Principal $\cI$,}\\ &\text{where:}\\&\mathcal{J}_1(\lambda_{\cK};\lambda_{\cI}):=\int_0^T\left[c_{\lambda}\| \lambda_{\cK,t}\|^2-\hat{p}^{(\lambda_{\cK}, \lambda_{\cI})}_{\rmK}(t)+\hat{p}^{(\lambda_{\cK}, \lambda_{\cI})}_{\rmI}(t)\right] d t \\&\mathcal{J}_2(\lambda_{\cI};\lambda_{\cK}):=\int_0^T\left[c_{\lambda}\| \lambda_{\cI,t}\|^2+\hat{p}^{(\lambda_{\cK}, \lambda_{\cI})}_{\rmK}(t)-\hat{p}^{(\lambda_{\cK}, \lambda_{\cI})}_{\rmI}(t)\right] d t
    \end{aligned}
\end{equation}
\normalsize
where $c_{\lambda}>0$ is a scaling constant for the cost of imposing the policy (which can be also chosen differently for principals), and $\hat{p}^{(\lambda_\cK, \lambda_\cI)}_{\rmK}$ and $\hat{p}^{(\lambda_\cK, \lambda_\cI)}_{\rmI}$ are the population density flows for state K and I under graphon game Nash equilibrium control given $\lambda=(\lambda_{\cK}, \lambda_\cI)$, i.e, $\bsalpha^\lambda$ (see Definition~\ref{def:ggne}). We are ready to define the corresponding game equilibrium.
\begin{define}
\label{def:duo-principal-stackelberg}
    We call a duo-principal strategy couple $(\lambda^{*}_{\mathcal{K}}, \lambda^{*}_{\mathcal{I}})$ a \textit{duo-principal Stackelberg graphon game equilibrium} (DSGE) if $\forall \lambda_{1}, \lambda_{2} \in \mathbf{\Lambda}$,
        \small
        \begin{equation*}
            \begin{aligned}
\mathcal{J}_1(\lambda_{\cK}^{*};\lambda_{\cI}^{*})&\leq\mathcal{J}_1(\lambda_{1};\lambda_{\cI}^{*}),\\[1mm]
            \mathcal{J}_2(\lambda_{\cI}^{*};\lambda_{\cK}^{*}) &\leq \mathcal{J}_2(\lambda_{2};\lambda_{\cK}^{*}).\\[1mm]
            \end{aligned}
        \end{equation*}
        \normalsize
\end{define}
We remark that this formulation could naturally be extended to multi-principal ($\geq 2$) case. Building on this formulation, one can follow the proof of Theorem~\ref{thm:characterization} to obtain a FBODE characterization result of the GGNE. A detailed unique existence analysis of the DSGE is left for future work.

\section{Numerical Study}
\label{sec:numerics}
\subsection{Algorithm Description}
In this section, we propose a bi-level algorithm for solving both the single-principal and duo-principal problems. At the minor-player level, we approximate the continuum model by sampling agents from the underlying graphon, thereby obtaining a finite-dimensional FBODE system. This system could be solved via fixed-point iteration, where we iteratively update the aggregates and controls, and solve forward and backward equations. At the principal level, we solve the optimization problem through a discrete search over her strategy space. The technical details are summarized in Algorithm~\ref{algo:minor_agent}, with steps specific to the duo-principal case highlighted in blue. Under the duo-principal setting, the last step is replaced by computing a two-player Nash equilibrium. Details are presented in Algorithm~\ref{algo:finnash}.

{\begin{algorithm}
\caption{\small Minor-agent graphon game (one principal)\label{algo:minor_agent}}

\footnotesize
\textbf{Input:} Terminal time $T$, graphon $w$, discretization level $N$, regulation bound $(\bar{\phi}, \bar{\psi})$, tolerance $\epsilon$, other model coefficients

\textbf{Output:} converged solution pairs $(u^{*}, p^{*})_{[N]}$, principal costs $(\hat{J}_{\cK},{\color{blue}\hat{J_{\cI}}})_{[N]}$, GGNE aggregates $(Z_{\rmK}, Z_{\rmI})_{[N]}$, optimal policy $\lambda^{*}$

\vskip1mm

\begin{algorithmic}[1]
\STATE Initiate grids $G$ by evenly spacing $[0,\bar{\phi}]\times[0,\bar{\psi}]$ with $N$ points
\FOR{$i \gets 1$ \TO $N$ {\color{blue} and $j \gets 1$ \TO $N$}}
\STATE Fix leaders' control $\lambda^{i}:=G^{i}$ {\color{blue} and $\lambda^{j}:=G^{j}$}
\STATE Initialize flows $(u,p)^{(0)}$
\WHILE{{$\|u^{(k)}-u^{(k-1)}\| > \epsilon$} or {$\|p^{(k)}-p^{(k-1)}\| > \epsilon$} (step k)}
    \STATE Compute $(\boldsymbol{Z}_{\rmK}, \boldsymbol{Z}_\rmI)^{(k)}$ (initiate control with $1$ as step $0$)
    \STATE Compute optimal control $\hat{\theta}^{x,{\lambda^{i,\color{blue}j}}}(t, e)^{(k)}$
    \STATE Solve Kolmogorov-Fokker–Planck equation to achieve $p^{(k+1)}$
    \STATE Solve Hamilton–Jacobi–Bellman equation to achieve $u^{(k+1)}$
    \STATE Save variables in steps 6-9 for the next iteration
\ENDWHILE
\STATE Calculate principal costs $(\hat{J}^{\lambda^{i,\color{blue}j}}_{\cK},{\color{blue}\hat{J}^{\lambda^{i,\color{blue}j}}_{\cI}})$
\ENDFOR
\STATE Solve principal's optimization $\lambda^{*}:=\argmin_{\lambda}(\hat{J}^{\lambda^{i}}_{\cK})_{i=1,\ldots,N}$

\RETURN $(u^{*}, p^{*})_{[N]}$, $(\hat{J}_{\mathcal{K}},{\color{blue}\hat{J_{\mathcal{I}}}})_{[N]}$, $(Z_{\rmK}, Z_{\rmI})_{[N]}, \lambda^{*}$
\end{algorithmic}
\end{algorithm}}
{\begin{algorithm}
\caption{\small Principal-level Nash Search (duo) \label{algo:finnash}}

\footnotesize
\textbf{Input:} $N\times N \times 2$ cost matrix \(\cM\) where $\cM_{ij} := \bigl(C_{1ij},\, C_{2ij}\bigr)$

\vskip1mm

\begin{algorithmic}[1]
\FOR{$i \gets 1$ \TO $N$ and $j \gets 1$ \TO $N$}
  \IF{$C_{1ij} \le \min_{k}\{C_{1kj}\}$ \textbf{and} $C_{2ij} \le \min_{l}\{C_{2il}\}$} 
    \STATE save $(i,j,(C_{1ij},\, C_{2ij}))$
\ENDIF
\ENDFOR
\RETURN qualified entries $\{(i,j,(C_{1ij},\, C_{2ij}))\}$ for equilibria
\end{algorithmic}
\end{algorithm}}

\subsection{Simulation and Real Application}
We present two experiments. The first one is a simulation on a power-law graphon setting, which could naturally extend to different graphon settings. The second focuses on piecewise constan graphon calibrated with real parameters.
\par{\textbf{Simulation Study. }} We consider a power-law graphon of the form $w(x, y)=c(x y)^{-\alpha}$ and specialize to the simple case $c= 1, \alpha=-1$. We then draw $n=50$ agent indices uniformly and independently from $[0,1]$, and order them to form the index set $\mathcal{N}=\left(i_{(1)}, i_{(2)}, \ldots, i_{(n)}\right)$. By construction, agent $i_{(1)}$ exhibits the weakest connectivity with others, while agent $i_{(n)}$ has the strongest, analogous to peripheral and highly influential individuals in real social networks. We implement our model on this sampled graph, choosing parameters symmetrically for states K and I (e.g., $\beta_{\rmS} = \beta_{\rmK}=\beta_{\rmI}, \mu_{\rmK}=\mu_{\rmI}$). We adopt time-independent policies for the principal for computational efficiency; one could instead consider using ML–based approaches for improved solution~\cite{gdml_ml_stackelberg}.

In Figure~\ref{fig:p1f1}, we plot the state distribution flow and aggregates for all 50 sampled agents in the absence of any regulation i.e., $\lambda$ is set equal to $\boldsymbol{0}$ exogenously; the color of the curves get darker with higher agent index, indicating one's stronger network connectivity with others. Agents with weak connectivity exhibit immediate declines in both $K$ and $I$ mass, effectively remaining uninformed and largely insensitive to peer influence. Figure~\ref{fig:p1f2} compares the highest- and lowest-connectivity agents under the \textit{optimized policy} (i.e., Stackelberg equilibrium policy) versus \textit{no regulation}. The imposed policy largely impacts individual state flow in motivating the spread of preferred news (K), even for low connectivity agents.

\begin{figure}[t]
    \centering
    \includegraphics[width=1.0\linewidth]{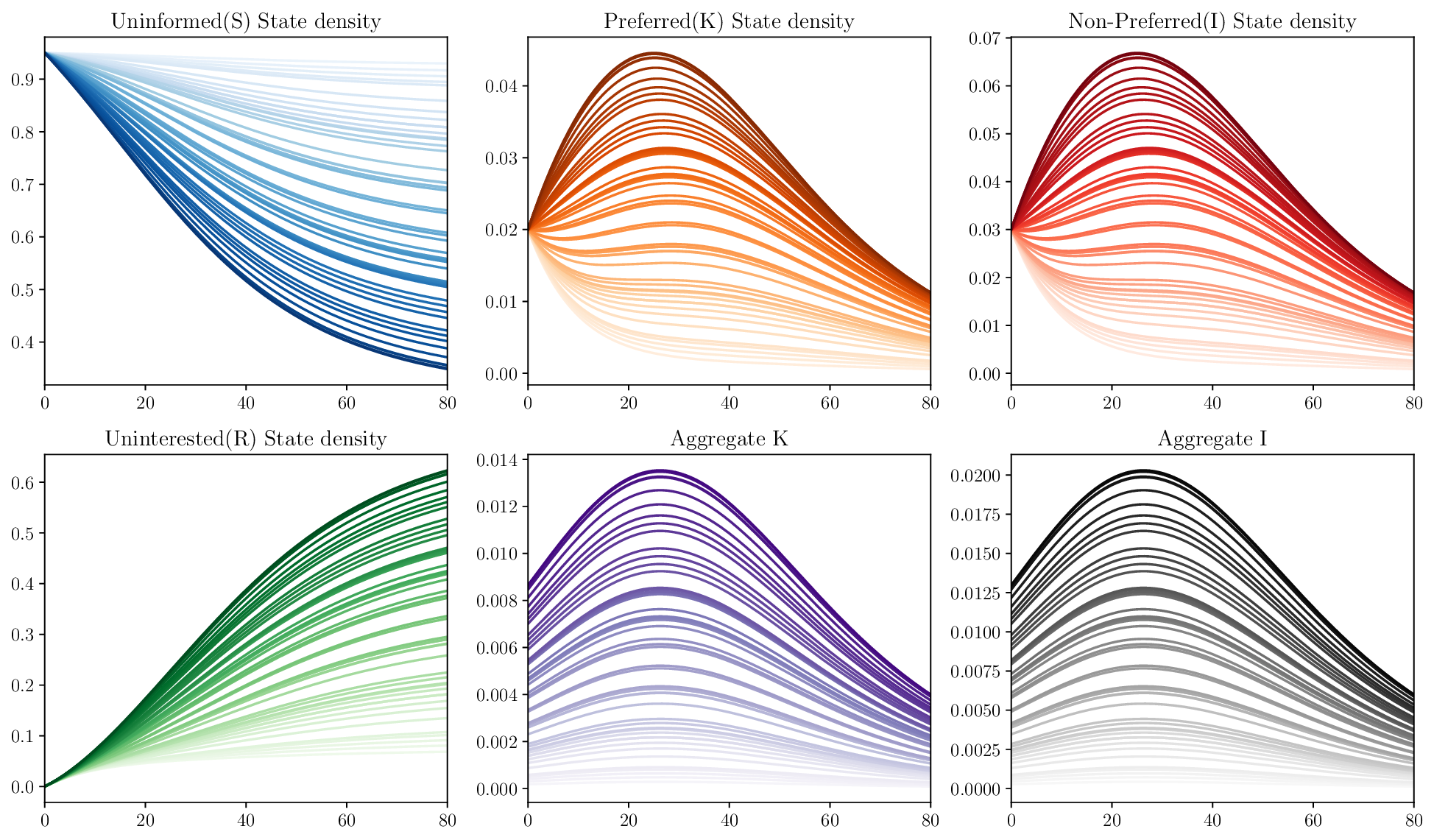}
    \caption{\small Sampled agents' densities and aggregates on the power-law graph. The color of the lines get darker with higher agent index.}
    \label{fig:p1f1}
\end{figure}

\begin{figure}[t]
    \centering
    \includegraphics[width=1.0\linewidth]{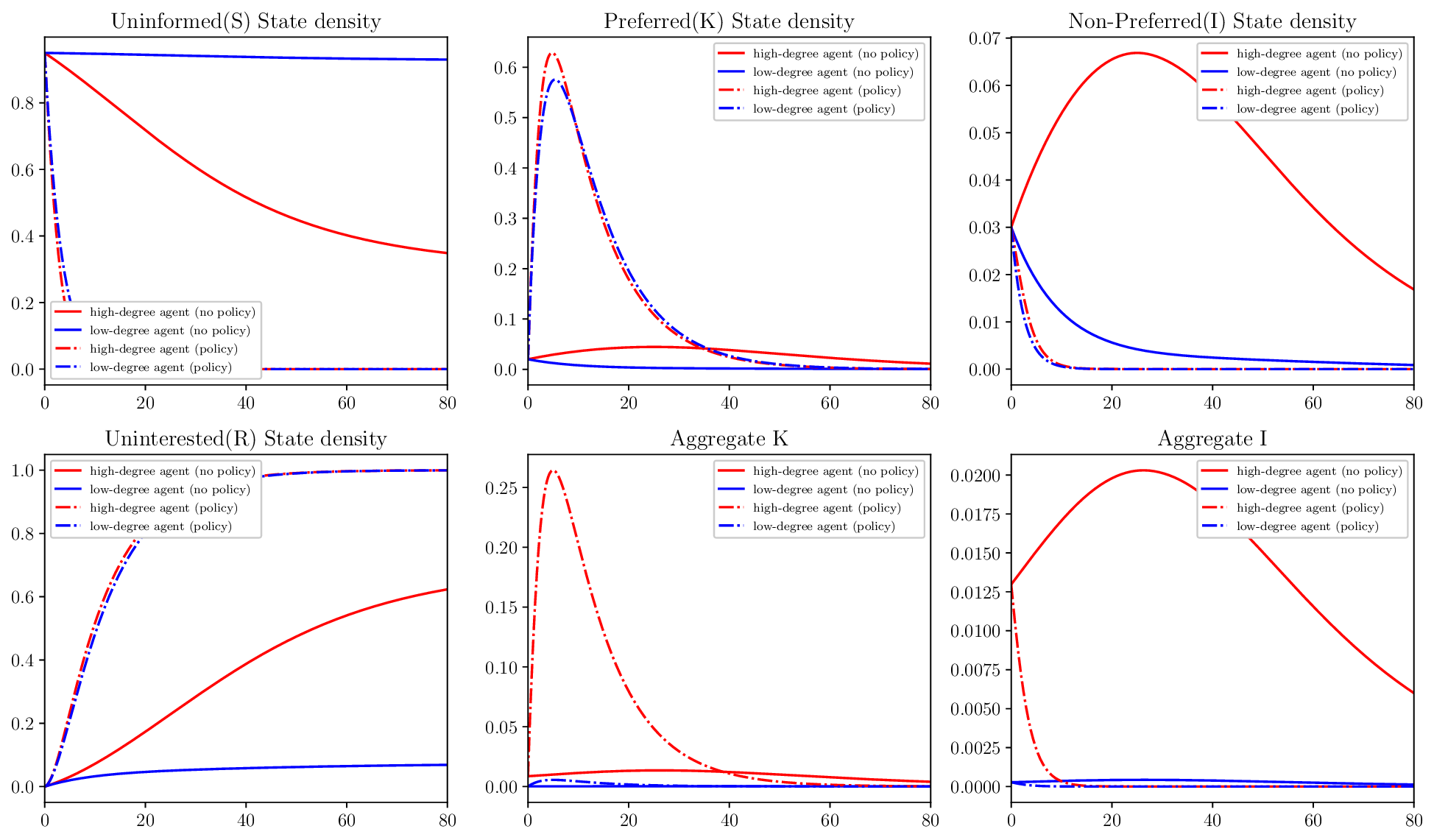}
    \caption{\small Socially (most) active/inactive agent under policy/no policy.}
    \label{fig:p1f2}
\end{figure}

\par{\textbf{Real Application. }}
We extend the first setting from~\cite{rumor_skir} and simulate age-grouped rumor propagation in social networks. We use \textit{piecewise constant graphon} that divides agent indices $[0,1]$ into $K$ intervals with lengths $m^{1}, \cdots, m^{K}$. We assume agents within the same age group are indistinguishable i.e., for $i \in [K]$, consider $x$ and $x'$ from the same group $k$, then $w(x,y) = w(x',y)$, $\forall y \in I$.  In parameter tables~\ref{tab:e1graphon},~\ref{tab:e1param}, we set graphon connectivity to a high level to reflect the rapid evolution of internet-based social networks, and calibrate model parameters from social-platform reports. At time $0$, we set 95\% of the population to be uninformed (S), 2\% to be spreading preferred news (K), and 3\% spreading non-preferred news (I). In this experiment we set $K=4$ to represent 18-29, 30-49, 50-64, 65+ age groups.

\begin{figure}[t]
    \centering
    \includegraphics[width=1.0\linewidth]{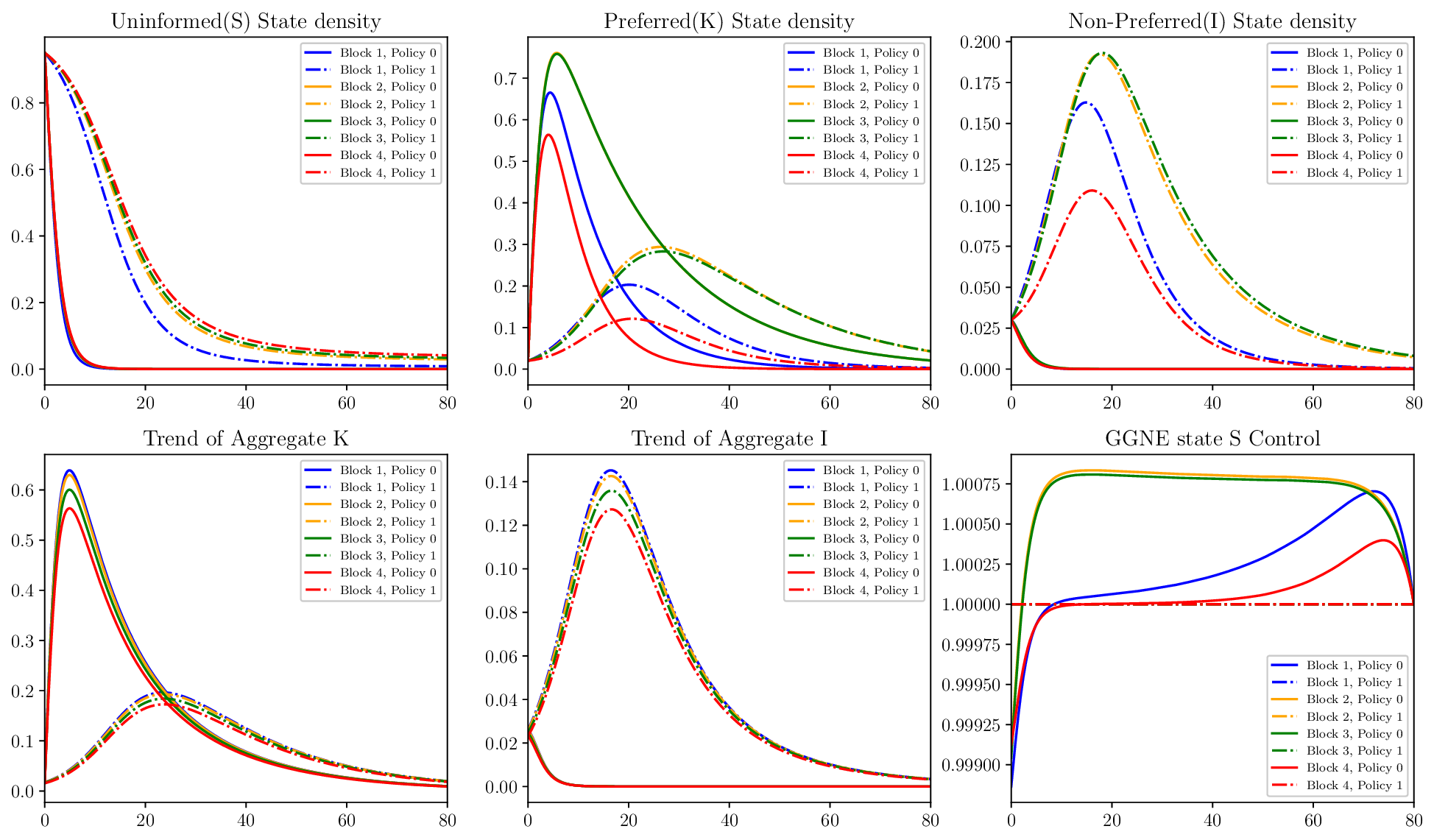}
    \caption{\small Stackelberg Equilibrium Policy (0, solid lines) vs No-Regulation (1, dashed lines) on age-groups.}
    \label{fig:p2f1}
\end{figure}

\begin{figure}[t]
    \centering
    \includegraphics[width=1.0\linewidth]{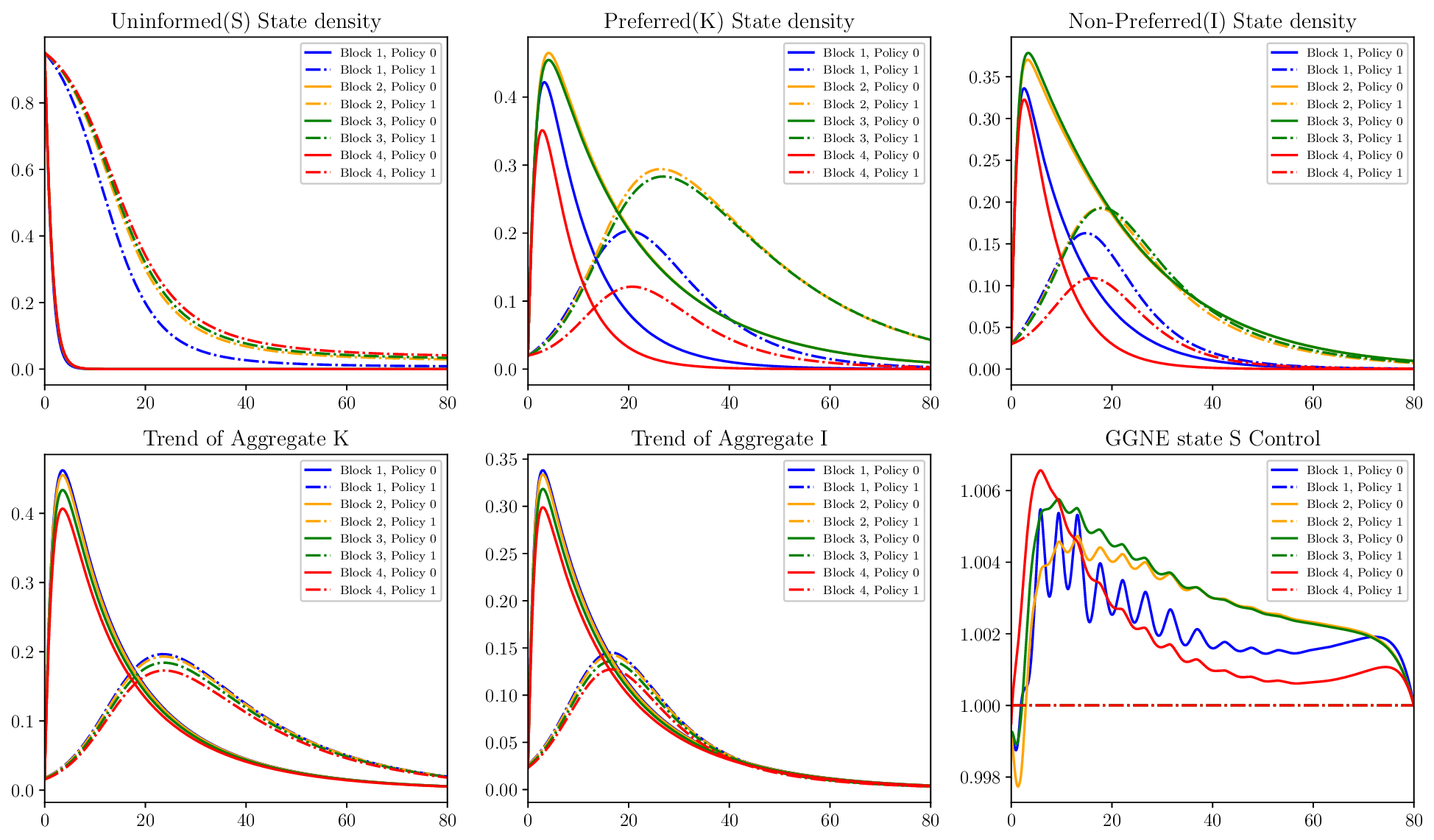}
    \caption{\small Duo-Principal Stackelberg Equilibrium (0, solid lines) vs No-Regulation (1, dashed lines) on age-groups.}
    \label{fig:p2f2}
\end{figure}

Figure~\ref{fig:p2f1} compares the \textit{optimized policy} (i.e., Stackelberg equilibrium policy) with the \textit{no-regulation} baseline i.e., $\lambda$ is set equal to 0 exogenously. The optimized policy induces an early surge in the density of preferred state K, which is later overtaken due to a moderately high forgetting rate. Nevertheless, the spread of non-preferred news gets effectively controlled for all time horizon. Figure~\ref{fig:p2f2} presents the duo-principal case, contrasting the principals' Stackelberg equilibrium outcome with the no-regulation baseline. Both opinions (K and I) peak immediately after time $0$, and this implies that competition between principals can trigger abrupt opinion divisions within the population.

\small
\begin{table}[ht]
\centering
\begin{tabular}{|c|c|c|c|c|}
\hline
Age & 18-29 & 30-49 & 50-64 & 65+ \\ \hline
18-29 & 1 & 0.9 & 0.8 & 0.7 \\ \hline
30-49 & 0.9 & 0.9 & 0.8 & 0.8 \\ \hline
50-64 & 0.8 & 0.8 & 0.9 & 0.8 \\ \hline
65+ & 0.7 & 0.8 & 0.8 & 0.8 \\ \hline
\end{tabular}
\caption{\small Piecewise Constan Graphon}
\label{tab:e1graphon}
\end{table}
\normalsize
\small
\begin{table}[ht]
\centering
\begin{tabular}{|c|c|c|c|c|c|c|}
\hline
\small Age & $\beta_{\textbf{S}}$ & $\beta_{\textbf{K}}$ & $\beta_{\textbf{I}}$ & $\mu_{\textbf{K}}$ & $\mu_{\textbf{I}}$ & $\eta$ \\ \hline
\small 18-29 & 0.4 & 0.5 & 0.75 & 0.1 & 0.1 & 0 \\ \hline
\small 30-49 & 0.3 & 0.42 & 0.62 & 0.05 & 0.05 & 0 \\ \hline
\small 50-64 & 0.3 & 0.32 & 0.48 & 0.05 & 0.05 & 0 \\ \hline
\small 65+ & 0.3 & 0.2 & 0.3 & 0.15 & 0.15 & 0 \\ \hline
\end{tabular}
\caption{\small Experiment Coefficients}
\label{tab:e1param}
\end{table}
\normalsize

\section{Conclusion and Future Work}
\label{sec:conclusion-future}
In this paper, we study the optimal control of rumor propagation in a large population in a game setting by using Stackelberg graphon games. We first introduce the graphon model of the individuals in the population under the policies of a single principal (i.e., regulator) who has \textit{preferred} news. Agents in the population control their jumps between finite number of states that represent their knowledge about the preferred or non-preferred news by choosing their communication level to minimize their own individual costs. Their state dynamics and their cost functionals are affected by the principal's policy choices. Later we introduce the principal's optimization problem which includes wider spread of preferred news and containment of non-preferred news as well as effort costs regarding the policy levels. We prove the existence of the graphon game equilibrium and also provide an extension to multi-leader Stackelberg equilibrium case with competing leaders. We provide numerical algorithm to solve (multi-leader) Stackelberg graphon games and analyze the experimental results with two different graphon choices which are piecewise-constant graphon and power law graphon. For future work, we plan to extend the existence uniqueness results for the Stackelberg graphon game.

\bibliographystyle{ieeetr}
\bibliography{ref.bib}

\section*{Appendix}
\subsection{Finite Player Formulation} \label{sec:finite_player}
To better motivate the study of the graphon game framework, we present the corresponding finite-player game on weighted network. Consider a population of $N \in \mathbb{N}$ agents index by $[N] := \{1,2,\ldots,N\}$. Their interactions are represented by an undirected weighted dense graph, with adjacency matrix $w:[N]\times[N] \rightarrow [0,1]$. For each pair of agents $i,j \in [N]$, $w_{ij}$ is the connection strength between them. Each agent $j \in [N]$ choose her control variable $\alpha_{t}^{j}$, namely her socializing level at time $t \in [0,T]$. Her state process is denoted by $(X_{t}^{j,N})_{t \in [0,T]}$, which evolves as a continuous-time Markov chain on finite state space $\{\rmS, \rmK, \rmI, \rmR\}$, corresponding to \textit{uninformed, spreading news K, spreading news I, uninterested}. Agent $j$ is also impacted by her neighbors who are in states $\rmK$ and $\rmI$ via their controls. In particular, let 
\small
\begin{equation}
    \label{eq:finite_player_Z}
    \begin{aligned}
        Z_{t, \rmK}^{j, N} &:= \frac{1}{N} \sum_{i=1}^N w_{i j} \cdot \alpha_{t}^{i} \cdot \mathds{1}_{\rmK}\big(X_{t}^{i,N}\big), \\ Z_{t, \rmI}^{j, N} &:= \frac{1}{N} \sum_{i=1}^N w_{i j} \cdot \alpha_{t}^{i} \cdot \mathds{1}_{\rmI}\big(X_{t}^{i,N}\big),
    \end{aligned}
\end{equation}
\normalsize
where $(Z_{t, \rmK}^{j, N}, Z_{t, \rmI}^{j, N})$ are defined as the aggregate variables that capture the influence of agent $j$'s neighbors who are actively spreading news at time $t$. In this way, the transition rate of agent $j$ from state S to K, as an example, is proportional to both her own control and K aggregate, i.e. $\propto \alpha_t^j Z_{t, \rmK}^{j, N}$. Denote the control profile of all $N$ agents by $\boldsymbol{\alpha}^{N} := \{\alpha_t^{1}, \ldots, \alpha_t^{N}\}_{t\in[0,T]}$. Given the controls of others $\boldsymbol{\alpha}^{-j}$, agent $j$'s cost functional that she wants to minimize by choosing her control is written as $\mathcal{J}^{j,N}(\boldsymbol{\alpha}^{j};\boldsymbol{\alpha}^{-j})$ similar to the cost functional in~\eqref{eq:total-expected-cost-formula} where we replace the aggregates with the finite-player versions in~\eqref{eq:finite_player_Z}. We are ready to introduce the finite-player Nash equilibrium on the networked system.
\begin{define}
        The control profile $\boldsymbol{\alpha}^N$ is an $N$-player Nash equilibrium if it is admissible and no player can gain from a unilateral deviation, i.e.,
\small
$$
\mathcal{J}^{j, N}\left(\bsalpha^j ; \bsalpha^{-j}\right) \leq \mathcal{J}^{j, N}\left(\sigma; \bsalpha^{-j}\right), \ \forall \sigma \in \mathbb{A}, \ \forall j \in \llbracket N\rrbracket.
$$
\normalsize
\end{define}

\end{document}